\theoremstyle{plain}
\newtheorem{theorem}{Theorem}
\newtheorem{lemma}[theorem]{Lemma}
\newtheorem{corollary}[theorem]{Corollary}
\theoremstyle{definition}
\newtheorem{definition}[theorem]{Definition}
\theoremstyle{remark}
\newtheorem{remark}[theorem]{Remark}
\title{Note on the negative base xor sequence $n\oplus_{-b}(-n)$}
\author{H.\ A.\ Verrill}
\begin{document}

\maketitle

\begin{abstract}
  We prove a relationship between certain integer expressions involving
  operators similar to the binary exclusive or.  This gives a proof and
generalization of a result conjectured about sequence 
A178729 in Sloane's Online Encyclopedia of Integer Sequences. 
We use transducers to prove this result.
\end{abstract}

\section{Introduction}

It was conjectured \cite[A178729]{oeis} that for non-negative
integers $n$, 
\begin{equation}n\oplus (3n) = n\oplus_{-2} (-n),
\label{equ:originalconjecture}
\end{equation}
where on the left side, $\oplus$ is the bitwise exclusive or operator,
defined
by 
\begin{equation}
\left(\sum_{i\ge 0} a_i 2^i\right)
\oplus
\left(\sum_{i\ge 0} b_i 2^i\right) = 
\sum_{i\ge 0} \bigl((a_i + b_i)\mod2\bigr) 2^i,
\label{eqn:defoplus2}
\end{equation}
where $a_i,b_i\in\{0,1\}$, and
mod is defined by
$$
a\mod b:=r\text{ where }0\le r<b \text{ and }b|(a-r).
$$
On the right side of (\ref{equ:originalconjecture}),
$\oplus_{-2}$ is defined by
the formula
\begin{equation}
\left(\sum_{i\ge 0} a_i (-2)^i\right)
\oplus_{-2}
\left(\sum_{i\ge 0} b_i (-2)^i\right) = 
\sum_{i\ge 0} \bigl((a_i + b_i)\mod2\bigr) 2^i,
\label{eqn:defoplus-2}
\end{equation}
with $a_i,b_i\in\{0,1\}$.
The arguments 
$\alpha:=\sum a_i (\epsilon2)^i$ and $\beta:=\sum b_i(\epsilon2)^i$
of (\ref{eqn:defoplus2}), where $\epsilon=1$,
must be non-negative integers,
whereas
$\alpha\oplus_{-b}\beta$ 
 (\ref{eqn:defoplus-2}), where $\epsilon=-1$,
is defined for all integers.

\begin{remark} 
  On the right-hand side of (\ref{eqn:defoplus-2}), we have
a sum of powers of $2$, whereas on the left, the sums are of powers
of $-2$.
\end{remark}

The first few terms of (\ref{equ:originalconjecture}) 
\cite[A178729]{oeis}, starting with $n=1$, are given by
$$2, 4, 10, 8, 10, 20, 18, 16, 18, 20.$$

A table of the coefficients of the expansions of some of the 
first few terms in (\ref{equ:originalconjecture})
are as follows, where $n_b$ means the coefficients 
of the base $b$ expansion of $n$.  Leading zeros are added
so that expressions are the same length, for easy comparison.
Digits correspond from right to left to increasing powers of $2$ or
$-2$, e.g., $110110$ in base $-2$ represents
$-2^5+2^4 + 2^2 - 2=-14$.
$$
\begin{array}{lllllll}
  \hline
n & n_2 & (3n)_2 & n_{-2} & (-n)_{-2} & ((3n)\oplus_2n)_2 & ((3n)\oplus_2n) \\
  \hline
10 & 1010 & 011110 & 11110 &001010 & 010100 & 20\\  
11 & 1011 & 100001 & 11111 &110101& 101010 &42\\
12 & 1100 & 100100 & 11100 &110100& 101000 &40\\
13 & 1101 & 100111 & 11101 &110111& 101010 &42\\
14 & 1110 & 101010 & 10010 &110110& 100100 &36\\
\hline
\end{array}
$$

\subsection{Generalized Result}
\label{subsec:gen}
We prove a more general version of 
(\ref{equ:originalconjecture}). 
First we introduce some notation.

\begin{definition}
\label{def:ominus}
We denote the set of
non-negative integers by $\mathbb N$, and the set of all integers
by $\mathbb Z$.
For a positive integer $b$, we define
the binary operators $\oplus_{-b}$ and $\ominus_b$ as follows.
The results of both operations are expansions in terms of powers
of $b$.  Thus these operators are functions
$\oplus_{-b}:\mathbb Z\times \mathbb Z\rightarrow \mathbb N$ and
$\ominus_{b}:\mathbb N\times \mathbb N\rightarrow \mathbb N$.

Let $\alpha$ and $\beta$ be any two non-negative integers.
They can be written uniquely 
as
$\alpha=\sum_{i\ge0}a_ib^i$
and $\beta=\sum_{i\ge0}b_ib^i$, with
$a_i,b_i\in\{0,\dots,b-1\}$, and only finitely many non-zero
terms.  We define
\begin{equation}
\left(\sum_{i\ge 0} a_i b^i\right)
\ominus_b
\left(\sum_{i\ge 0} b_i b^i\right) = 
\sum_{i\ge 0} \bigl((a_i - b_i)\mod b\bigr) b^i.
\label{eqn:minusb}
\end{equation}
Now let $\alpha$ and $\beta$ be any two integers.
They can be written uniquely 
as $\alpha=\sum_{i\ge0}a_i(-b)^i$
and $\beta=\sum_{i\ge0}b_i(-b)^i$, with
$a_i,b_i\in\{0,\dots,b-1\}$.  We define
\begin{equation}
\left(\sum_{i\ge 0} a_i (-b)^i\right)
\oplus_{-b}
\left(\sum_{i\ge 0} b_i (-b)^i\right) = 
\sum_{i\ge 0} ((a_i + b_i)\mod b) b^i.
\label{eqn:oplus-b}
\end{equation}
For $\alpha=\sum_{i\ge0}a_ib^i\in\mathbb N$ with $b_i\in \{0,\dots,b-1\}$,
we define $\overline{\overline{\alpha}}$ by
\begin{equation}
\overline{\overline{\alpha}}:=
  \sum_{i\ge0}\min(a_i,1)b^i.
\end{equation}

\end{definition}

\begin{remark}
Theorem~\ref{thm:main},
states that for all positive integers $n$ and $b$,
\begin{equation}
\overline{\overline{((b+1)n)\ominus_b n}} = n\oplus_{-b} (-n).
\label{equ:generalresult}
\end{equation}

  In the case that $b=2$,
  (\ref{equ:generalresult}) is exactly the same as
  (\ref{equ:originalconjecture}).
\end{remark}
\section{Finite State Machines}

We prove 
(\ref{equ:originalconjecture}), and a generalization,
using a transducer,
also known as a Mealy machine,
which is a type of deterministic finite state automata with output.
We recall some definitions from \cite[\S4.3]{AutomaticSequences}
for convenience.
\begin{definition}
  A  transducer is the following sextuple
  $$T=(Q,\Sigma,\delta,q_0,\Delta,\lambda),$$
  where

  \begin{tabular}{ll}
    $Q$ & is a finite set, known as the set of states,\\
    $\Sigma$ & is a finite set, known as the alphabet,\\
    $\delta$ & is a transition function,
    $\delta:Q\times\Sigma\rightarrow Q$,\\
    $q_0$ & is an element of $Q$, called the {\it initial state},\\
    $\Delta$ & is the output alphabet (for us, $\Delta=\Sigma$),\\
    $\lambda$ & is the output function,
    $\lambda:Q\times\Sigma\rightarrow\Delta^*$.    
  \end{tabular}
\end{definition}

Here, $\Delta^*$ is the set of finite
words over the alphabet $\Delta$.  For the remainder of the paper we set
$$\Delta=\Sigma:=\{0,1,\dots,b-1\}.$$
  A transducer describes the process of constructing an
  output string from a given input string.
  Given a string $a_ka_{k-1}\dots a_1a_0\in\Sigma^*$, we compute the output string
  to be the concatenation
  \begin{equation}
    \label{eqn:defofmealy}
  b_kb_{k-1}\dots b_1b_0,
  \end{equation}
  where $b_0=\lambda(q_0,a_0)$,
  $b_i=\lambda(q_i,a_i)$, for $i=1,\dots,k$, and the
  sequence of states is defined by $q_i=\sigma(q_{i-1},a_i)$ for
  $i=1,\dots,k$.
  We work with strings read from right to left, since this is the usual
  convention used for the Arabic place value number system.
  We use transducers to describe
  functions such as $n\mapsto ((b+1)n)\ominus_b n$.
  The input is given as a base $b$ expansion of $n$, that is,
  $a_k\dots a_0\in\Sigma^*$, where $n=\sum_{i= 0}^ka_ib^i$.
  The output is also an element of $\Sigma^*$.

  A  transducer is usually represented by a diagram
  consisting of circles labeled with
  states, and
  labeled arrows between
  states, corresponding to $\sigma$ and $\lambda$.
  For example, suppose that $\delta(q_1,\sigma_1)=q_2$, and
  $\lambda(q_1,\sigma_1)=\sigma_2$, with
  $q_1,q_2\in Q,\sigma_1,\sigma_2\in\Sigma$,
  then this is represented by the following subdiagram of the diagram
  of the machine:
  $$
  \begin{tikzpicture}[node distance=2 cm]
    \node[circle,draw,inner sep=0pt,minimum size=0.5cm](a){$q_1$};
\node[circle,draw,inner sep=0pt,minimum size=0.5cm, right=of a](b){$q_2$};
\draw[->] (a) -- node[above]{$\sigma_1 | \sigma_2$} (b); 
  \end{tikzpicture}
  $$
  The diagram for a transducer $T$ encapsulates all the information
  required to define $T$.

\section{Proof of the Result}

In the following, $n$ is a non-negative integer, and
$b$ is a positive integer.
Let $Q=\{$``even 0'', ``odd 0'', ``even 1'', ``odd 1''$\}$,
 $\Sigma=\{0,1,\dots,b-1\}$, $q_0=$ ``even 0'',
 and $\delta, \lambda$ are as indicated in
 the following diagram.
 Then the transducer $T=(Q,\Sigma,\delta,q_0,\Delta,\lambda)$
 converts a string corresponding to a non-negative
 integer $n$ written in
  base $b$, to $n$ written in base
  $-b$ \cite[\S5.3]{AutomaticSequences}.

  \begin{equation}
 \label{eqn:convnto-b}   
\scalebox{0.9}{
  \begin{tikzpicture}[node distance=2 cm,baseline=(current  bounding  box.center)]
\pgfmathsetmacro{\minimalwidth}{0.1}
\node[circle,draw,inner sep=0pt,minimum size=0.1cm](a)
{$\begin{array}{c}\text{even}\\ 0\end{array}$};
\node[left of = a](s){$n$};
\node[circle,draw,inner sep=0pt,minimum size=0.1cm, right=of a](b)
{$\begin{array}{c}\text{odd}\\ 0\end{array}$};
\node[circle,draw,inner sep=0pt,minimum size=0.1cm, right=of b](c)
{$\begin{array}{c}\text{even}\\ 1\end{array}$};
\node[circle,draw,inner sep=0pt,minimum size=0.1cm, right=of c](d)
{$\begin{array}{c}\text{odd}\\ 1\end{array}$};
\draw[->] (a) to[out=30, in=150] node[above]{$d | d$} (b); 
\draw[->] (b) to[out=210, in=-30] node[below]{$0 | 0$}(a); 
\draw[->] (b) to[out=30, in=150] node[above]{$\begin{array}{c}\text{for }d>0\\
d | b-d\end{array}$} (c); 
\draw[->] (c) to[out=210, in=-30] node[below]{$\begin{array}{c}
\text{for }d<b-1\\
d | d+1\end{array}$} (b); 
\draw[->] (c) to[out=30, in=150] node[above]{$b-1 | 0$} (d); 
\draw[->] (d) to[out=210, in=-30] node[below]{$d | b-1-d$}(c); 
\draw[->] (s) -- (a); 
\end{tikzpicture}
}
  \end{equation}

  The input is a word in $\Sigma^*$ given by the
  base $b$ expansion of $n$, with
$d$ denoting one of the terms of the expansion.
The output is the base $-b$ expansion of $n$.
The states correspond to the parity of the digit position
in the expansion, and to the value
carried from one position to the next in computing the expansion.

A transducer with the
following diagram
converts from $n$ in base $b$ to $-n$ in base $-b$
\cite[\S5.3]{AutomaticSequences}.

  \begin{equation}
 \label{eqn:conv-nto-b}   
 \scalebox{0.9}{
\begin{tikzpicture}[node distance=2 cm,baseline=(current  bounding  box.center)]
\pgfmathsetmacro{\minimalwidth}{0.1}
\node[circle,draw,inner sep=0pt,minimum size=0.1cm](a)
{$\begin{array}{c}\text{odd}\\ 0\end{array}$};
\node[circle,draw,inner sep=0pt,minimum size=0.1cm, right=of a](b)
{$\begin{array}{c}\text{even}\\ 0\end{array}$};
\node[above of = b](s){$n$};
\node[circle,draw,inner sep=0pt,minimum size=0.1cm, right=of b](c)
{$\begin{array}{c}\text{odd}\\ 1\end{array}$};
\node[circle,draw,inner sep=0pt,minimum size=0.1cm, right=of c](d)
{$\begin{array}{c}\text{even}\\ 1\end{array}$};
\draw[->] (a) to[out=30, in=150] node[above]{$d | d$} (b); 
\draw[->] (b) to[out=210, in=-30] node[below]{$0 | 0$}(a); 

\draw[->] (b) to[out=30, in=150] node[above]{$\begin{array}{c}\text{for }d>0\\
d | b-d\end{array}$} (c); 
\draw[->] (c) to[out=210, in=-30] node[below]{$\begin{array}{c}
\text{for }d<b-1\\
d | d+1\end{array}$} (b); 
\draw[->] (c) to[out=30, in=150] node[above]{$b-1 | 0$} (d); 
\draw[->] (d) to[out=210, in=-30] node[below]{$d | b-1-d$}(c); 
\draw[->] (s) -- (b); 
\end{tikzpicture}
}
\end{equation}

\begin{lemma}
  \label{lem:1}
  The following transducer computes the
  function which takes the base $b$ expansion of $n$ to the
  base $b$ expansion of $n\oplus_{-b} -n$
  (as in Definition~\ref{def:ominus}).
  $$
  \scalebox{0.9}{
\begin{tikzpicture}
\pgfmathsetmacro{\minimalwidth}{0.1}
\node[circle,draw,inner sep=0pt,minimum size=1cm] at (0,0) (a){$00$};
\node[circle,draw,inner sep=0pt,minimum size=1cm] at (3,0)(b){$10$};
\node[circle,draw,inner sep=0pt,minimum size=1cm] at (6,0)(c){$11$};
\node(s) at (0,1.5){$n$};
\path[->,draw] (a) to  
[in=200,out=160,loop,distance=2cm] node[left] {$0|0$} (a);
\path[->,draw] (c) to  
[in=-20,out=20,loop,distance=2cm] node[right] 
{$b-1|0$} (c);
\path[->,draw] (b) to  
[in=-70,out=-110,loop,distance=2cm] node[below] {$\begin{array}{c}\text{for }0<d<b-1\\
d | 1\end{array}$} (b);
\draw[->] (a) to[out=30, in=150] node[above]{$\begin{array}{c}\text{for }d>0\\
d | 0\end{array}$} (b); 
\draw[->] (b) to[out=210, in=-30] node[below] {$0|1$}(a);
\draw[->] (b) to[out=30, in=150] node[above]{$b-1|1$} (c); 
\draw[->] (c) to[out=210, in=-30] node[below] {$\begin{array}{c}\text{for }d<b-1\\
d | 0\end{array}$}(b);
\draw[->](s)--(a);
\end{tikzpicture}
  }
  $$
  
  \end{lemma}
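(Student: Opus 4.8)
We may assume $b\ge 2$; for $b=1$ the alphabet is $\{0\}$, every machine in sight is trivial, and there is nothing to prove. The plan is to realize the claimed transducer as a quotient of the reachable part of the \emph{product} of the two transducers~(\ref{eqn:convnto-b}) and~(\ref{eqn:conv-nto-b}). Fix a non-negative integer $n$ and let $w$ be the base-$b$ expansion of $n$ padded with enough leading zeros that the base-$(-b)$ expansions of both $n$ and $-n$ have at most $|w|$ digits. Feeding $w$ into~(\ref{eqn:convnto-b}) outputs the length-$|w|$ base-$(-b)$ digit string $c_k\cdots c_0$ of $n$, and feeding $w$ into~(\ref{eqn:conv-nto-b}) outputs the length-$|w|$ base-$(-b)$ digit string $c'_k\cdots c'_0$ of $-n$. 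Form the product transducer: its states are pairs of states, its initial state is the pair of initial states, its transition function acts componentwise, and on input digit $d$ it emits the single symbol obtained by adding modulo $b$ the two symbols that the component transducers emit on $d$. On input $w$ this product emits $\sum_i\bigl((c_i+c'_i)\bmod b\bigr)b^i$, which by~(\ref{eqn:oplus-b}) equals $n\oplus_{-b}(-n)$; leading zeros in the output do not change its value, so the choice of $|w|$ is immaterial. Hence it suffices to prove that the product transducer is equivalent to the three-state machine in the statement.

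Both~(\ref{eqn:convnto-b}) and~(\ref{eqn:conv-nto-b}) flip the parity label of their current state at every step, and both start in an ``even'' state, so in any state reachable from the initial state the two parity labels agree; this leaves at most eight candidate product states. A short forward search from $(\text{even }0,\text{even }0)$ shows that exactly six occur, and they fall into three groups:
$$
\begin{array}{l}
\{(\text{even }0,\text{even }0),\;(\text{odd }0,\text{odd }0)\},\\[2pt]
\{(\text{odd }0,\text{odd }1),\;(\text{even }1,\text{even }0)\},\\[2pt]
\{(\text{even }1,\text{even }1),\;(\text{odd }1,\text{odd }1)\}.
\end{array}
$$
I would then check that each group is a block of indistinguishable states: the two states in a block emit the same symbol on every input digit $d$, and for every $d$ their $d$-successors again lie in one block. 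Since a transducer computes the same function after deletion of unreachable states and after merging of indistinguishable states, collapsing the three blocks to single states --- with the block of $(\text{even }0,\text{even }0)$ as the new initial state, and the blocks named $00$, $10$, $11$ --- reproduces the diagram in the statement, once the induced transitions and outputs are read off.

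The only real work is the finite verification just described, and it is made painless by the observation that at a common step the symbols emitted by~(\ref{eqn:convnto-b}) and~(\ref{eqn:conv-nto-b}) always sum to one of $0,\,1,\,b,\,b+1$, so their sum modulo $b$ is $0$ or $1$; in fact this value is constant on each block, equal to $0$ on blocks $00$ and $11$ and to $1$ on block $10$, which is precisely the output labeling in the diagram. One then checks, one digit range at a time, that the carry data in the two machines move the pair of states consistently among the three blocks; for instance, from $(\text{odd }0,\text{odd }1)$ on a digit $d$ with $0<d<b-1$, machine~(\ref{eqn:convnto-b}) emits $b-d$ and moves to its ``even $1$'' state, while machine~(\ref{eqn:conv-nto-b}) emits $d+1$ and moves to its ``even $0$'' state, so the product emits $(b-d)+(d+1)\equiv 1\pmod b$ and moves to $(\text{even }1,\text{even }0)$, again in block $10$. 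I expect the bookkeeping at the two branch digits $d=0$ and $d=b-1$, where the machines split, to be the only genuine obstacle; everything else is routine.
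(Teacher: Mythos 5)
Your proposal is correct and follows essentially the same route as the paper: you form the product of the transducers (\ref{eqn:convnto-b}) and (\ref{eqn:conv-nto-b}) with outputs summed mod $b$, find the same six reachable states as in Figure~\ref{fig}, and collapse them into the same three classes (carries $0,0$; carries differing; carries $1,1$) to recover the stated machine. Your extra care about padding with leading zeros and about justifying the merge via indistinguishable states only makes explicit what the paper treats informally.
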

\begin{proof}
In order to compute $n\oplus_{-b} -n$ we combine the output of the
two machines corresponding to diagrams (\ref{eqn:convnto-b})
  and (\ref{eqn:conv-nto-b}).
  To find the machine for
$n\oplus_{-b} -n$, we simultaneously compute both $n$ and $-n$ in base
  $-b$, digit by digit.  As each successive digit is computed, we immediately
  take the sum mod $b$.
  We move from odd to even states
simultaneously in computations of both $n$ and $-n$ in base $-b$, 
so the states of the combined transducer
depend on parity of the digit position,
and on whether the
digits
carried are
zero or not.  This gives a total of $8$ states; however, it turns out
that only $6$ states occur in a component of a state space containing
the initial state ``even 0,0'', as shown in Figure~\ref{fig}.
In the circles, the state is indicated by
listing the carry for obtaining the base $-b$ expansion of $n$, followed by 
the carry for the base $-b$ expansion of $-n$, below the digit position parity.
\begin{figure}
\scalebox{0.8}{   
\begin{tikzpicture}[scale={3.5}]
\pgfmathsetmacro{\minimalwidth}{0.1}
\node[circle,draw,inner sep=0pt,minimum size=0.1cm](a) at (0,0)
{$\begin{array}{c}\text{even}\\ 0,0\end{array}$};
\node[circle,draw,inner sep=0pt,minimum size=0.1cm](b) at (1,1)
{$\begin{array}{c}\text{odd}\\ 0,0\end{array}$};
\node at (-0.5,0)(s){$n$};
\node[circle,draw,inner sep=0pt,minimum size=0.1cm](c) at (3,1)
{$\begin{array}{c}\text{even}\\ 1,0\end{array}$};
\node[circle,draw,inner sep=0pt,minimum size=0.1cm](d) at (4,0)
{$\begin{array}{c}\text{odd}\\ 1,1\end{array}$};
\node[circle,draw,inner sep=0pt,minimum size=0.1cm](e) at (1,-1)
{$\begin{array}{c}\text{odd}\\ 0,1\end{array}$};
\node[circle,draw,inner sep=0pt,minimum size=0.1cm](f) at (3,-1)
{$\begin{array}{c}\text{even}\\ 1,1\end{array}$};
\draw[->] (a) to[out=60, in=215] node[above left]{$0 | 0\oplus 0=0$} (b); 
\draw[->] (b) to[out=240, in=30] node[below right]{$0 | 0\oplus 0=0$}(a); 

\draw[->] (a) to[out=-30, in=120] node{$\begin{array}{c}
\text{for }d>0\\d | d\oplus (b-d)=0\end{array}$} (e); 
\draw[->] (e) to[out=160, in=-70] node[below left]{$0 | 0\oplus 1=1$}(a); 

\draw[->] (b) to[out=30, in=150] node[above]{$\begin{array}{c}\text{for }d>0\\
d | (b-d)\oplus d=0 \end{array}$} (c); 
\draw[->] (c) to[out=210, in=-30] node[below]{$0|1\oplus 0=1$}(b);
\draw[->] (e) to[out=30, in=150] node[above]
{$b-1 | 1\oplus0=1$} (f); 
\draw[->] ([xshift=-4mm]e) --node[left]
{$\begin{array}{c}\text{for }0<d<b-1\\d | (b-d)\oplus(d+1)=1
\end{array}$} ([xshift=-4mm] c); 
\draw[->] (f) to[out=210, in=-30] node[above]{$\begin{array}{c}
\text{for }d<b-1\\
d | (d+1)\oplus(b-1-d)=0\end{array}$} (e); 

\draw[->]([xshift=4mm] c)--
node[xshift=19mm]{$\begin{array}{c}\text{for }0<d<b-1\\d | (d+1)\oplus(b-d)=1
\end{array}$}([xshift=4mm] e);
\draw[->] (c) to[out=-30, in=120] node[above]{$b-1|0\oplus 1=1$}(d);
\draw[->] (d) to[out=160, in=-70]
 node[xshift=-6mm]
{$\begin{array}{c}
\text{for }d<b-1\\d | (b-1-d)\oplus (d+1)=0\end{array}$} (c); 
\draw[->] (s) -- (a); 
\draw[->] (f) to[out=60, in=215] node[above left]{$b-1 | 0\oplus 0=0$} (d); 
\draw[->] (d) to[out=240, in=30] node[below right]{$b-1 | 0\oplus 0=0$}(f); 
\end{tikzpicture}
}
\caption{Diagram of a transducer with output $n\oplus_{-b}(-n)$.}
\label{fig}
\end{figure}
Outputs from the machines in
(\ref{eqn:convnto-b})
  and (\ref{eqn:conv-nto-b})
  are combined to give the xor output shown in
  Figure~\ref{fig}, where $x\oplus y$ means $x+y\mod b$.

Observing this diagram, we see that it can be collapsed into just three states,
since we see that in computing the output and transitions,
the parity is irrelevant; all that matters is what we are carrying,
and whether or not the carries in
transducers (\ref{eqn:convnto-b})
  and (\ref{eqn:conv-nto-b})
are the same or not.
The new equivalent diagram is as given in the statement of the Lemma.
\end{proof}

\begin{corollary}
The coefficients of the base $b$ expansion of $n\oplus_{-b}(-n)$
are all either $0$ or $1$.
  \end{corollary}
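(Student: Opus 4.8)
The plan is to read the conclusion directly off Lemma~\ref{lem:1}. By that lemma, the base $b$ expansion of $n\oplus_{-b}(-n)$ is obtained by running the three-state transducer displayed in the statement of the lemma on the base $b$ expansion of $n$. Every arrow of that transducer carries a label of the form $d\,|\,e$ in which the emitted symbol $e$ is either $0$ or $1$: the self-loops at $00$, $11$, $10$ emit $0$, $0$, $1$ respectively, and the four remaining transitions emit $0$, $1$, $1$, $0$. Hence, for any input word, the output word produced by the transducer is a word over $\{0,1\}\subseteq\Sigma$.

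Next I would observe that, since $b\ge 2$, the symbols $0$ and $1$ are legitimate base $b$ digits, so the output word is already the base $b$ expansion of the integer it represents (possibly padded by leading zeros, which are harmless). Combining this with Lemma~\ref{lem:1}, the digits $c_i$ in the expansion $n\oplus_{-b}(-n)=\sum_{i\ge0}c_ib^i$ all satisfy $c_i\in\{0,1\}$, which is exactly the assertion of the corollary.

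There is essentially no obstacle here; the only points needing a line of justification are (i) that each transition of the transducer in Lemma~\ref{lem:1} emits a single symbol lying in $\{0,1\}$ — equivalently, that in the six-state machine of Figure~\ref{fig} every output quantity $x\oplus y=(x+y)\bmod b$ that appears is $0$ or $1$, which one checks case by case from the short list of carry-pairs that actually occur — and (ii) that no carry propagation is needed to turn the output word into a genuine base $b$ expansion, which holds precisely because the emitted digits never reach $b$. An alternative, transducer-free argument would expand both $n$ and $-n$ in base $-b$ and show that $(a_i+b_i)\bmod b\in\{0,1\}$ for every $i$ directly, but that amounts to re-deriving the state analysis already carried out in the proof of Lemma~\ref{lem:1}, so I would simply cite the lemma.
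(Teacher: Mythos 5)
Your proposal is correct and follows essentially the same route as the paper: the paper's proof simply observes that all possible outputs in the transducer (Figure~\ref{fig}, equivalently the collapsed machine of Lemma~\ref{lem:1}) are $0$ or $1$. Your extra remarks — that $0,1$ are valid base $b$ digits and no carrying is needed — only make explicit what the paper leaves implicit.
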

\begin{proof}
 We observe that all
possible outputs in Figure~\ref{fig} are $0$ or $1$.
  \end{proof}

\begin{theorem}
For all positive integers $b$ and $n$, we have
\begin{equation}
  \overline{\overline{((b+1)n)\ominus_b n}} = n\oplus_{-b} (-n),
  \label{equ:generalresultrestate}  
\end{equation}
with notation as in \S\ref{subsec:gen}.
\label{thm:main}
\end{theorem}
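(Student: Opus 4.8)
The plan is to build a transducer $T'$ that converts the base-$b$ expansion of $n$ into the base-$b$ expansion of $\overline{\overline{((b+1)n)\ominus_b n}}$, and then to observe that, after identifying equivalent states, $T'$ is \emph{the same} machine as the one in Lemma~\ref{lem:1}. Since that machine outputs $n\oplus_{-b}(-n)$, the identity follows at once.

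First I would work out the digits of $((b+1)n)\ominus_b n$. Writing $n=\sum_{i\ge 0}a_ib^i$ with $a_i\in\{0,\dots,b-1\}$ and $a_{-1}:=0$, the identity $(b+1)n=bn+n$ shows that the base-$b$ digit of $(b+1)n$ in position $i$ is $(a_i+a_{i-1}+c_{i-1})\bmod b$, where the carries satisfy $c_{-1}=0$ and $c_i=\lfloor (a_i+a_{i-1}+c_{i-1})/b\rfloor$; since $a_i+a_{i-1}+c_{i-1}\le 2b-1$, an easy induction gives $c_i\in\{0,1\}$. Subtracting $n$ digitwise modulo $b$, the position-$i$ digit of $((b+1)n)\ominus_b n$ is $\bigl((a_i+a_{i-1}+c_{i-1})\bmod b-a_i\bigr)\bmod b=(a_{i-1}+c_{i-1})\bmod b$. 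Because $a_{i-1}+c_{i-1}\in\{0,\dots,b\}$, applying the bar operation of Definition~\ref{def:ominus} turns this digit into $0$ when $a_{i-1}+c_{i-1}\in\{0,b\}$ and into $1$ otherwise; only finitely many digits are nonzero, so this determines $\overline{\overline{((b+1)n)\ominus_b n}}$ as an integer.

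Next I would package this as a transducer. Reading the digits of $n$ from least to most significant (padding with leading zeros as needed), let the state reached after $a_i$ record $u_i:=a_i+c_i\in\{0,\dots,b\}$, with initial state $u_{-1}=0$. Then $c_i=1$ exactly when $a_i+u_{i-1}\ge b$, giving the transition $u_i=a_i$ or $u_i=a_i+1$ accordingly, and by the previous paragraph the digit emitted on reading $a_i$ is $0$ if $u_{i-1}\in\{0,b\}$ and $1$ otherwise --- notably depending only on the state. The crucial point is that the partition of $\{0,\dots,b\}$ into the three blocks $\{0\}$, $\{1,\dots,b-1\}$, $\{b\}$ is compatible with these transitions: from a state in $\{1,\dots,b-1\}$, input $0$ leads to $0$, input $b-1$ leads to $b$, and any intermediate input $d$ leads to $d$ or $d+1$, again inside $\{1,\dots,b-1\}$; from $0$, input $d$ leads to $d$; from $b$, input $d$ leads to $d+1$. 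Collapsing the three blocks to states $\mathbf 0,\mathbf m,\mathbf b$ and recording the (state-determined) outputs, one obtains a three-state transducer; comparing arrow by arrow, it is literally the transducer of Lemma~\ref{lem:1} under $\mathbf 0\leftrightarrow 00$, $\mathbf m\leftrightarrow 10$, $\mathbf b\leftrightarrow 11$, with matching initial states. By Lemma~\ref{lem:1} this machine sends the base-$b$ expansion of $n$ to that of $n\oplus_{-b}(-n)$, so $\overline{\overline{((b+1)n)\ominus_b n}}=n\oplus_{-b}(-n)$.

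I expect the main obstacle to be the bookkeeping in the last step: verifying that the three-block collapse is genuinely well defined, i.e.\ that the \emph{block} reached from a state in $\{1,\dots,b-1\}$ is independent of the representative, and checking that appending extra leading zeros to the input only produces the correct (at most two) additional digits of $\overline{\overline{((b+1)n)\ominus_b n}}$ followed by zeros, so that the transducer's output word really does name the intended integer and not one differing by spurious high-order digits.
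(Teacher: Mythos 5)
Your proposal is correct and takes essentially the same route as the paper: your state $u_i=a_i+c_i$ is exactly the paper's carry state $\left\lfloor\frac{a+c}{b}\right\rfloor+a$ for the multiplication-by-$(b+1)$-with-digitwise-subtraction transducer on $\{0,\dots,b\}$, and your collapse onto the blocks $\{0\}$, $\{1,\dots,b-1\}$, $\{b\}$ is the paper's final reduction to the three-state machine of Lemma~\ref{lem:1}. The leading-zero bookkeeping you flag at the end is harmless, since the same padded input is fed to the identical machine on both sides; the paper passes over this point as well.
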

\begin{proof}
  First let's find a transducer
  $T_1=(Q_b:=\{0,\dots,b\},
  \Sigma_b:=\{0,\dots,b-1\},
  \delta_b,
  q_0=0,
  \Delta_b=\Sigma_b,
  \lambda_b
  )$,
  and corresponding 
  diagram,
  for multiplication by $b+1$ in base $b$.
  The input and output are both elements of $\Sigma_b^*$,
  corresponding to base $b$ expansions of integers.
  
When we multiply a number $n=a_kb^k + a_{k-1}b^{k-1}+\cdots + a_0$ by
$b+1$, in order to obtain an expansion
$(b+1)n=c_mb^m + c_{m-1}b^{m-1}+\cdots + c_0$, we apply the usual
multiply and carry, with the ``carry'' value, denoted
by $c$ in the following, corresponding to the state.

E.g., suppose for some $n_1<k$ we have
$$(b+1)(a_{n_1}b^{n_1+1}+\cdots + a_0)=
cb^{n_1+1} + (c_{n_1}b^{n_1}+\cdots + c_0),$$
where $c_0,\dots,c_{n_1}$ are integers between
$0$ and $b-1$, and $c$ is a non-negative integer, then
$$(b+1)(a_{n_1+1}b^{n_1+1} + a_{n_1}b^{n_1+1}+\cdots + a_0)=
c'b^{n_1+2} + c_{n_1+1}b^{n_1+1} + (c_{n_1}b^{n_1}+\cdots + c_0),$$
where the output value $\lambda_b(c_{n_1},a_{n_1+1})$ is
$$c_{n_1+1}=(a_{n_1+1} + c_{n_1})\mod b,$$
and the new state, $\delta_b(c_{n_1},a_{n_1+1})$, is $$c'=
\left\lfloor\frac{a_{n_1+1} + c_{n_1}}{b}\right\rfloor+a_{n_1+1}.$$

As part of a transition diagram, removing subscripts, this appears as
the following figure.
\begin{equation}
  \label{eqn:t1}
  \scalebox{0.9}{
  \begin{tikzpicture}[node distance=2 cm,baseline=(current  bounding  box.center)]
\pgfmathsetmacro{\minimalwidth}{0.1}
\node[circle,draw,inner sep=0pt,minimum size=1.8cm](a)
{$c$};
\node[circle,draw,inner sep=0pt,minimum size=1.8cm, right=of a](b)
{$
\left\lfloor\frac{a + c}{b}\right\rfloor+a$};
\draw[->] (a) to[out=30, in=150] node[above]{$
a \Bigl\lvert(a + c)\mod b$} (b); 
  \end{tikzpicture}
  }
\end{equation}
Now if we combine the multiplication by $b+1$ with a mod $b$ subtraction,
corresponding to the operation
$n\mapsto \bigl((b+1)n\bigr)\ominus_b n$, since we just need
to subtract the  $k^{th}$ component 
of $n$ from
the $k^{th}$ component of $(b+1)n$, modulo $b$, in the base $b$ expansions,
this becomes the following figure.
\begin{equation}
  \label{eqn:t2}
  \scalebox{0.9}{
\begin{tikzpicture}[node distance=2 cm,baseline=(current  bounding  box.center)]
\pgfmathsetmacro{\minimalwidth}{0.1}
\node[circle,draw,inner sep=0pt,minimum size=1.8cm](a)
{$c$};
\node[circle,draw,inner sep=0pt,minimum size=1.8cm, right=of a](b)
{$
\left\lfloor\frac{a + c}{b}\right\rfloor+a$};
\draw[->] (a) to[out=30, in=150] node[above]{$
a | c$} (b); 
\end{tikzpicture}
}
\end{equation}

In a diagram for a machine which takes the input being the base $b$
expansion for $n$, and the output the base $b$ expansion for
$(b+1)n$, the states correspond to the possible carry values,
which are $0,1,\dots,b$.
To see this, note that if $0\le c\le b$ and
$0\le a< b$ then $0\le a+c<2b$, so
the state label  (the carry value, shown as the target
state in (\ref{eqn:t1}) and (\ref{eqn:t2})) satisfies
$\left\lfloor\frac{a + c}{b}\right\rfloor<2$
so 
$\left\lfloor\frac{a + c}{b}\right\rfloor +a< 1 + a<1+b\le b$.
To summarize, the transducer
corresponding to
the operation
$n\mapsto ((b+1)n)\ominus_b n$, with input and output 
written in base $b$, satisfies the following
properties.
\begin{itemize}
\item
  From the state $0$, we pass to state zero only if
$a=0$, since
$\left\lfloor\frac{a + c}{b}\right\rfloor +a\ge a$.
\item
  From the state $0$, with input $0<a$, we pass 
to state $a$; note that all inputs $a$ satisfy $a<b$.
\item
  From state $c$ with $0<c<b$,
\begin{itemize}
\item
  we can pass to state $0$ precisely when
the input $a$ is zero.
\item
we can pass to state $b$ precisely whenever $a=b-1$, since $a+c\ge b$.
\item
For all other inputs, we pass to another state $c'$ with
$0<c<b$.
\end{itemize}
\item
For state $c=b$, in the case
$a=b-1$, we remain in state $b$.  Otherwise we pass to a state
with value between $1$ and $b-1$.
\end{itemize}
Finally, we want a diagram for a machine corresponding to the
operation
$$n\mapsto\overline{\overline{\bigl((b+1)n\bigr)\ominus_b n}}.$$
The reduction $\overline{\overline{\alpha}}$ replaces
a zero term in the base $b$ expansion sequence for $\alpha$
by zero, and all other terms by $1$.
By considering the transitions shown
in (\ref{eqn:t2}), we see that
the output $c$ of the transducer
for $n\mapsto\bigl((b+1)n\bigr)\ominus_b n$
will only be zero modulo $b$ when 
$c$ is divisible by $b$, which happens only when  $c=0$
or $c=b$.
Thus, when we 
consider a machine for 
$n\mapsto\overline{\overline{\bigl((b+1)n\bigr)\ominus_b n}}$,
where we reduce all outputs 
of $n\mapsto\bigl((b+1)n\bigr)\ominus_b n$,
to $0$ or $1$,
our machine has the following diagram:
$$
\begin{tikzpicture}
\pgfmathsetmacro{\minimalwidth}{0.1}
\node[circle,draw,inner sep=0pt,minimum size=1cm] at (0,0) (a){$0$};
\node[circle,draw,inner sep=0pt,minimum size=1cm] at (3,0)(b){$_{0<c<b}$};
\node[circle,draw,inner sep=0pt,minimum size=1cm] at (6,0)(c){$b$};
\node(s) at (0,1.5){$n$};
\path[->,draw] (a) to  
[in=200,out=160,loop,distance=2cm] node[left] {$0|0$} (a);
\path[->,draw] (c) to  
[in=-20,out=20,loop,distance=2cm] node[right] 
{$b-1|0$} (c);

\path[->,draw] (b) to  
[in=-70,out=-110,loop,distance=2cm] node[below] {$\begin{array}{c}\text{for }0<a<b-1\\
a | 1\end{array}$} (b);

\draw[->] (a) to[out=30, in=150] node[above]{$\begin{array}{c}\text{for }a>0\\
a | 0\end{array}$} (b); 
\draw[->] (b) to[out=210, in=-30] node[below] {$0|1$}(a);

\draw[->] (b) to[out=30, in=150] node[above]{$b-1|1$} (c); 
\draw[->] (c) to[out=210, in=-30] node[below] {$\begin{array}{c}\text{for }a<b-1\\
a | 0\end{array}$}(b);

\draw[->](s)--(a);
\end{tikzpicture}
$$
We see by inspection that, up to relabeling, this is exactly the same machine
as in
Lemma~\ref{lem:1}, hence proving the result.
\end{proof}

\section{Conclusion}

It was not immediately obvious how
(\ref{equ:originalconjecture}) should be
generalized.  Using transducers not only gives an elegant algorithmic proof,
but also helped in the discovery of the generalization given in
Theorem~\ref{thm:main}.
  A transducer encapsulates a way to compute a function,
  which simplifies proofs, since we have a clearly laid out visualization
  of the construction of the functions in question.

\end{document}